\newcommand\BibTeX{{\rmfamily B\kern-.05em \textsc{i\kern-.025em b}\kern-.08em
T\kern-.1667em\lower.7ex\hbox{E}\kern-.125emX}}
\DeclarePairedDelimiter{\abs}{\lvert}{\rvert}
\DeclarePairedDelimiter{\norma}{\lVert}{\rVert}
\DeclareMathOperator{\supp}{supp}
\newcommand{\N}{\mathbb{N}}
\newcommand{\Z}{\mathbb{Z}}
\newcommand{\diff}{\mathrm{d}}
\newtheorem{theorem}{Theorem}
\newtheorem{lemma}{Lemma}
\newtheorem{definition}{Definition}
\newcommand{\cC}{\,{\mathcal C}\,}
\newcommand{\cE}{\,{\mathcal E}\,}
\newcommand{\cEE}{\,{\mathcal{EE}}\,}
\newcommand{\Econ}{\stackrel{\cE}{\rightarrow}}
\newcommand{\EEcon}{\stackrel{\cEE}{\rightarrow}}
\newcommand{\Ccon}{\stackrel{\cC}{\rightarrow}}
\begin{document}
%

\title{Boundary homogenization for a triharmonic intermediate problem
}

\author{Jos\'{e} M. Arrieta, Francesco Ferraresso and Pier Domenico Lamberti
}


\maketitle

\begin{abstract}
We consider the triharmonic  operator subject to homogeneous boundary conditions of intermediate type on a bounded domain of the N-dimensional Euclidean space. We study its spectral behaviour when the boundary of the domain undergoes a perturbation of oscillatory type.  We identify the appropriate limit problems which depend on whether the strength of the oscillation is above or below a critical threshold. We analyse in detail the critical case which provides a typical homogenization problem leading to a strange boundary term in the limit problem.
\end{abstract}

%


\vspace{-6pt}

\section{Introduction}

Given a sufficiently regular bounded domain $\Omega$ in ${\mathbb{R}}^N$ with $N\geq 2$ and $f\in L^2(\Omega)$, we consider the following
boundary value problem
\begin{equation}
\label{classical}
\left\{
\begin{array}{ll}
-\Delta^3u+u=f,& \ \ {\rm in}\ \Omega,\\
u=0,& \ \ {\rm on}\ \partial\Omega,\\
\frac{\partial u}{\partial \nu}=0,& \ \ {\rm on}\ \partial\Omega,\\
\frac{\partial^3 u}{\partial \nu^3}=0,& \ \ {\rm on}\ \partial\Omega,
\end{array}
\right.
\end{equation}
where $\nu $ denotes the unit outer normal to $\partial \Omega$. The variational weak formulation of problem (\ref{classical})
reads
\begin{equation}\label{weak}
\int_{\Omega} \Bigl( D^3u:D^3\varphi +u\varphi \Bigr)\,dx =\int_{\Omega} f\varphi\,dx,\ \ \ \forall \varphi \in W^{3,2}(\Omega)\cap W^{2,2}_0(\Omega),
\end{equation}
in the unknown $u \in  W^{3,2}(\Omega)\cap W^{2,2}_0(\Omega)$.  Here $D^3 v=\{ \frac{\partial ^3v}{\partial x_i\partial x_j\partial _k} \}_{i,j,k=1,2,3}$ denotes the set of all derivatives of order three of a function $v$ and $D^3u:D^3\varphi =\sum_{i,j,k=1}^3 \frac{\partial ^3u}{\partial x_i\partial x_j\partial _k}\frac{\partial ^3\varphi}{\partial x_i\partial x_j\partial _k} $is the usual Frobenius product. Moreover,
$ W^{k,2}(\Omega)$ denotes  the Sobolev space of functions in $L^2(\Omega )$ with weak derivatives in $L^2(\Omega)$ up to order $k$ endowed with its standard norm, and $ W^{k,2}_0(\Omega)$ the closure  in $W^{k,2}(\Omega)$ of the space $C^{\infty}_c(\Omega ) $  of smooth functions with compact support in $\Omega$. We note that the first two boundary conditions in (\ref{classical}) are encoded in the condition $u\in W^{2,2}_0(\Omega)$, while the third boundary condition in (\ref{classical}) is the natural boundary condition arising from integrating by parts the left-hand side in \eqref{weak}, see e.g., \cite[Chp. 1, Prop. 2.4]{Nec}.

Recall that in the classical Dirichlet problem for the triharmonic operator, the boundary condition
$\frac{\partial^3 u}{\partial \nu^3}=0$ in (\ref{classical}) is replaced by the condition $\frac{\partial^2 u}{\partial \nu^2}=0$
and the corresponding weak problem can be formulated  exactly as in (\ref{weak}) with $W^{3,2}(\Omega)\cap W^{2,2}_0(\Omega)$ replaced by $W^{3,2}_0(\Omega)$. We note that using the energy space $W^{3,2}(\Omega )$ in (\ref{weak}) rather than $W^{3,2}(\Omega)\cap W^{2,2}_0(\Omega)$ would lead to a Neumann-type boundary value problem in the same spirit of standard Neumann  problems for the Laplace or  the biharmonic operator. Thus, since the energy space  $W^{3,2}(\Omega)\cap W^{2,2}_0(\Omega)$ used in (\ref{weak}) satisfies the inclusions $W^{3,2}_0(\Omega)\subset W^{3,2}(\Omega)\cap W^{2,2}_0(\Omega)\subset W^{3,2}(\Omega)$,    we refer to problem (\ref{classical}) as  an intermediate problem.  For an introduction to the theory of polyharmonic operators we refer to the extensive monograph \cite{GazzGS}.

By standard operator theory, problems (\ref{classical}) and (\ref{weak}) can be recast in the form
\begin{equation}
H_{\Omega}u=f,
\end{equation}
where $H_{\Omega}$ is a positive self-adjoint operator densely defined in $L^2(\Omega)$ such that the domain ${\rm Dom} (H_{\Omega}^{1/2})$ of its square root $H_{\Omega}^{1/2}$ is $W^{3,2}(\Omega)\cap W^{2,2}_0(\Omega)$ and such that $<H_{\Omega}^{1/2}u,H_{\Omega}^{1/2} \varphi >_{L^2(\Omega )}= \int_{\Omega}D^3u:D^3\varphi +u\varphi dx$ for all $u,\varphi \in W^{3,2}(\Omega)\cap W^{2,2}_0(\Omega)$. Moreover, $u\in {\rm Dom} (H_{\Omega})$ if and only if $u\in {\rm Dom} (H_{\Omega}^{1/2})$ and there exists $f\in L^2(\Omega)$ such that  $<H_{\Omega}^{1/2}u,H_{\Omega}^{1/2} \varphi >_{L^2(\Omega )}=<f, \varphi >_{L^2(\Omega )} $ for all $\varphi \in {\rm Dom} (H_{\Omega}^{1/2})$, in which case $H_{\Omega}u=f$. If $\Omega$ is sufficiently regular (a Lipschitz continuous boundary is enough) then the resolvent $H^{-1}_{\Omega}$ is compact, hence the spectrum of $H_{\Omega}$ is discrete.  Formally, the operator $H_{\Omega}$ can be identified with the classical operator
$-\Delta^3 + \mathbb{I}$ subject to the boundary conditions in (\ref{classical})

In this paper, we continue the analysis addressed in  \cite{Comprendou, ArrLamb} for the case of the biharmonic operator, and we study the  compact convergence of  the resolvent operators $H^{-1}_{\Omega_{\epsilon}}$ defined on suitable families of domains $\{\Omega_{\epsilon }\}_{\epsilon >0}$ approaching  a fixed domain $\Omega$ as $\epsilon \to 0$.
As in \cite{Comprendou, ArrLamb}, in order to simplify the setting, we suppose that $\Omega = W \times (-1,0)$ where $W$ is a sufficiently regular bounded domain of ${\mathbb{R}}^{N-1}$,   $\Omega_{\epsilon}= \{(\bar{x},x_N)\in {\mathbb{R}}^N : \bar{x}\in W,\, -1 < x_N < g_\epsilon(\bar{x}) \}$ where  $g_\epsilon(\bar{x})= \epsilon^\alpha b(\bar{x}/\epsilon)$ for all $\bar{x}\in W$, and $b$  is a $Y$-periodic smooth function with $Y=\left(-1/2,1/2\right)^{N-1}$.

Compact convergence is a standard notion in functional analysis and is equivalent to the convergence in operator norm in the case of self-adjoint operators defined on a fixed Hilbert space.  In our case, the underlying  Hilbert space is the space  $L^2(\Omega_{\epsilon})$   which depends on $\epsilon$. This leads to a number of technical difficulties which can be overcome by using  the notion of ${\mathcal{E}}$-compact convergence where ${\mathcal{E}}$ denotes an operator which allows to pass from the reference Hilbert space $L^2(\Omega )$ to the other Hilbert spaces $L^2(\Omega_{\epsilon})$. In our setting, ${\mathcal{E}}$  is just the extension-by-zero operator which can be thought as an operator from $L^2(\Omega)$ to $L^2(\Omega_{\epsilon})$ defined by ${\mathcal{E}}u=u_0|{\Omega_{\epsilon}}$ for all $u\in L^2(\Omega)$, where $u_0$ is the function defined by $u$ in $\Omega$ and zero outside $\Omega$. For the convenience of the reader we recall the following definition (see e.g., \cite{arbr} and the references therein).

\begin{definition}\label{e-convergence}
i) We say that $v_\epsilon\in L^2(\Omega_\epsilon)$ $\mathcal{E}$-converges to $v\in L^2(\Omega)$ if $\|v_\epsilon-\mathcal{E}v\|_{L^2(\Omega_\epsilon)}\to 0$ as $\epsilon\to 0$. We write this as
$v_\epsilon\Econ v$.
\par\noindent ii) The family of bounded linear operators $B_\epsilon\in \mathcal{L}(L^2(\Omega_\epsilon))$ $\mathcal{EE}$- converges to $B\in \mathcal{L}(L^2(\Omega))$ if $B_\epsilon v_\epsilon\Econ Bv$ whenever $v_\epsilon\Econ v$. We write this as
$B_\epsilon\EEcon B$.
\par\noindent iii) The family of bounded linear and compact  operators $B_\epsilon\in \mathcal{L}(L^2(\Omega_\epsilon))$ $\mathcal{E}$-compact converges to $B\in \mathcal{L}(L^2(\Omega))$ if $B_\epsilon\EEcon B$ and for any family of functions $v_\epsilon\in L^2(\Omega_\epsilon)$ with $\|v_\epsilon\|_{L^2(\Omega_\epsilon)}\leq 1$ there exists a subsequence, denoted by $v_\epsilon$ again, and a function $w\in L^2(\Omega)$ such that $B_\epsilon v_\epsilon \Econ w$. We write
$B_\epsilon\Ccon B$.
\end{definition}

  We note that the  ${\mathcal{E}}$-compact convergence of the resolvent operators $H^{-1}_{\Omega_{\epsilon }}$ implies not only the convergence of the solutions $u_{\epsilon}$ of the Poisson problems  $H_{\Omega_{\epsilon}}u_{\epsilon }=f$ but  also the convergence of the eigenvalues and eigenfunctions of the operators $H_{\Omega_{\epsilon }}$.

Our main result is the following theorem  which can be considered as the triharmonic analogue of \cite[Theorem 7.3]{ArrLamb}  concerning a problem somewhat close to the so-called Babuska Paradox for the biharmonic operator (see also the important contributions to this subject in \cite{Maz-Naz, Maz-Naz-Pla}). Here and in the sequel  the part of the boundary of $\Omega$ given by $W\times \{0\}$ is denoted by $W$.

\begin{theorem}\label{mainthm}
With the notation above, the following statements hold true.
\begin{itemize}
\item[(i)] \textup{[Spectral stability]} If $\alpha > 3/2$, then $H^{-1}_{\Omega_\epsilon } \overset{\mathcal{C}}{\rightarrow} H^{-1}_{\Omega }$.
\item[(ii)] \textup{[Strange term]} If $\alpha = 3/2$, then $H^{-1}_{\Omega_\epsilon,I} \overset{\mathcal{C}}{\rightarrow} \hat{H}^{-1}_{\Omega}$, where $\hat{H}_\Omega$ is the operator $-\Delta^3 + \mathbb{I}$ with intermediate boundary conditions on $\partial \Omega \setminus W$ and the following boundary conditions on $W$: $u=\frac{\partial u}{\partial x_n}=\frac{\partial^3 u}{\partial x_n^3} - K \frac{\partial^2 u}{\partial x_n^2} = 0$, where the factor $K$ is given by

   \begin{equation} \label{kappa}
    K =  \int_{Y\times (-\infty,0)} |D^3 V|^2\, d{y}=  - \int_Y \Bigg( \Delta\Bigg(\frac{\partial^2 V}{\partial y_N^2}\Bigg) + 2 \Delta_{N-1}\Bigg(\frac{\partial^2 V}{\partial y_N^2}\Bigg) \Bigg) b(\bar{y}) d{\bar{y}},
    \end{equation}
  $\Delta_{N-1}$ denotes the Laplacian in the first N-1 variables,  and $V$ is a function, $Y$-periodic in the variables $\bar{y}$, satisfying the following microscopic problem
    \[
    \begin{cases}
    \Delta^3 V = 0, &\textup{in $Y \times (-\infty, 0)$},\\
    V(\bar{y}, 0) = 0, &\textup{on $Y$},\\
    \frac{\partial V}{\partial y_N}(\bar{y}, 0) = b(\bar{y}), &\textup{on $Y$},\\
    \frac{\partial^3 V}{\partial y_N^3}(\bar{y}, 0) = 0, &\textup{on $Y$}.
    \end{cases}
    \]
    \item[(iii)] \textup{[Degeneration]} If $0< \alpha < 3/2$ and $b$ is non-constant, then $H^{-1}_{\Omega_\epsilon } \overset{\mathcal{C}}{\rightarrow} H^{-1}_{\Omega, D}$, where $H_{\Omega, D}$ is the operator $-\Delta^3 + \mathbb{I}$ with intermediate boundary conditions on $\partial \Omega \setminus W$ and  classical Dirichlet boundary conditions on $W$, namely $u=\frac{\partial u}{\partial x_n}=\frac{\partial^2 u}{\partial x_n^2}=0$ on $W$.
\end{itemize}
\end{theorem}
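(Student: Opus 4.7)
The plan is to establish all three claims through the standard scheme for $\mathcal{E}$-compact convergence of resolvents. First, I would derive uniform a priori bounds on $u_\epsilon := H_{\Omega_\epsilon}^{-1} f_\epsilon$ in $W^{3,2}(\Omega_\epsilon) \cap W^{2,2}_0(\Omega_\epsilon)$ from the energy identity combined with a uniform Poincar\'e-type inequality, which is available because $\{\Omega_\epsilon\}$ satisfies a uniform cone condition. Second, using a uniform Stein-type extension on the Lipschitz family, extract a subsequence and a candidate limit $u$ with strong $L^2$ convergence on compact subsets of $\Omega$. Third, identify the limit equation by testing against $\varphi_\epsilon = \varphi - w_\epsilon$ in the weak formulation, where $w_\epsilon$ is a boundary-layer corrector tuned to restore the condition $\partial_N \varphi_\epsilon = 0$ on the oscillating upper boundary $\{x_N = g_\epsilon(\bar x)\}$. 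The separation between the three regimes is then dictated entirely by the scaling of this corrector.

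A Taylor expansion at $x_N = 0$ shows that $\partial_N\varphi(\bar x, g_\epsilon(\bar x)) \approx \epsilon^\alpha b(\bar x/\epsilon)\partial_N^2\varphi(\bar x, 0)$, suggesting the ansatz
\begin{equation*}
w_\epsilon(x) = \epsilon^{\alpha+1}\,V\!\left(\tfrac{\bar x}{\epsilon},\tfrac{x_N}{\epsilon}\right)\partial_N^2\varphi(\bar x, 0)\,\chi(x),
\end{equation*}
with $\chi$ a smooth macroscopic cutoff near $W$. A change of variables $y = x/\epsilon$, together with the periodicity of $V$ in $\bar y$ and the decay of $V$ as $y_N \to -\infty$ (a standard consequence of the variational formulation of the cell problem on the half-strip), would yield
\begin{equation*}
\|D^3 w_\epsilon\|_{L^2(\Omega_\epsilon)}^2 \sim \epsilon^{2\alpha - 3}\,K\int_W \bigl(\partial_N^2 \varphi\bigr)^2 d\bar x,
\end{equation*}
while $\|w_\epsilon\|_{L^2(\Omega_\epsilon)} \to 0$. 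The critical threshold $\alpha = 3/2$ is precisely the value at which the correction energy is of order one.

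Parts (i) and (iii) follow from this scaling. For $\alpha > 3/2$, $w_\epsilon \to 0$ in $W^{3,2}$, so $\varphi_\epsilon \to \varphi$ strongly and the weak form of $H_{\Omega_\epsilon}$ passes to that of $H_\Omega$. For $\alpha < 3/2$, an admissible test function on $\Omega_\epsilon$ of bounded energy approximating $\varphi$ can exist only when $\partial_N^2\varphi = 0$ on $W$, since the non-constancy of $b$ prevents any periodic cancellation of the correction on the cell. By Mosco-type convergence of the associated quadratic forms, this adds the constraint $\partial_N^2 u = 0$ on $W$ to the limit form domain, which combined with $u \in W^{2,2}_0(\Omega)$ identifies the limit operator as $H_{\Omega, D}$.

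Part (ii), the critical case, is the heart of the argument and the main obstacle. At $\alpha = 3/2$ both the cross term $\int_{\Omega_\epsilon} D^3 u_\epsilon : D^3 w_\epsilon$ and the diagonal term $\int_{\Omega_\epsilon} |D^3 w_\epsilon|^2$ contribute at leading order and must be computed precisely. I would apply the oscillating test-function method of Tartar: use $\Delta^3 V = 0$ on $Y \times (-\infty, 0)$ together with the boundary conditions on $V$ to integrate by parts, converting the bulk integral of $D^3 V$ against slowly varying factors into a boundary integral on $Y$ against $b(\bar y)\, \partial_N^2 u(\bar x, 0)\, \partial_N^2 \varphi(\bar x, 0)$; then average over the periodicity cell by the Riemann-Lebesgue lemma for $Y$-periodic integrands. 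This produces the strange term $K\int_W \partial_N^2 u\,\partial_N^2\varphi\,d\bar x$ in the limit equation, with the equivalence of the two expressions for $K$ in \eqref{kappa} being a single integration by parts on the half-strip using the boundary data for $V$. The principal technical hurdle is the simultaneous uniform control of the traces of $u_\epsilon$ on the degenerating strip $\{0 < x_N < g_\epsilon\}$ and the exponential decay of $V$ on $Y \times (-\infty, 0)$, both of which are needed to justify the integration by parts and the limit passage rigorously.
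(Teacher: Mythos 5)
Your strategy (a Tartar-type oscillating corrector $w_\epsilon$ plus Mosco-type convergence of the forms) is genuinely different from the paper's, which pulls back test functions by a diffeomorphism $T_\epsilon$ and then identifies the limit by the periodic unfolding method; your scaling analysis does locate the threshold $\alpha=3/2$ correctly. But in the critical case the proposal leaves precisely the decisive step unproven. Testing with $\varphi-w_\epsilon$ makes the strange term come from the cross term $\int_{\Omega_\epsilon} D^3u_\epsilon:D^3w_\epsilon$, a pairing of two merely weakly convergent sequences, to which the Riemann--Lebesgue lemma does not apply. After your integration by parts based on $\Delta^3V=0$, the surviving boundary terms on $\Gamma_\epsilon$ carry a factor $\epsilon^{-1/2}$ and pair traces of $D^2u_\epsilon$ with oscillating weights built from $D^3V$, $b$ and $\nabla g_\epsilon$; to see that they stay bounded and to compute their limit one must use the natural condition $\frac{\partial^3 V}{\partial y_N^3}(\cdot,0)=0$ together with the constraint identities obtained by differentiating $\nabla u_\epsilon=0$ along $\Gamma_\epsilon$ (the paper's \eqref{idprep}), and then verify that the resulting constant is exactly $K=\int_{Y\times(-\infty,0)}|D^3V|^2\,dy$, which is the content of the cell-problem energy identity \eqref{idenergy}, not of ``a single integration by parts'' in \eqref{kappa}. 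Equivalently, what is missing is the identification of the microscopic behaviour of $u_\epsilon$ near $W$ (the paper's factorization $\hat v(\bar x,y)=V(y)\,\partial^2_{x_N}v(\bar x,0)$, obtained via unfolding and the trace lemma of Casado-D\'iaz, Luna-Laynez and Su\'arez-Grau); you flag this as ``the principal technical hurdle'' but supply no tool for it, and it is the heart of statement (ii), not a technicality. Two further points: $\varphi-w_\epsilon$ is not admissible, since its value and gradient on $\Gamma_\epsilon$ are only $O(\epsilon^3)$ and $O(\epsilon^2)$ rather than zero, so an extra lifting of asymptotically negligible $W^{3,2}$-energy must be constructed; and $V$ itself does not decay as $y_N\to-\infty$ (its $Y$-average is of the form $\bar b\,y_N+a\,y_N^2$) -- only $D^3V$ and the nonzero Fourier modes decay -- so the estimates for $w_\epsilon$ must be argued from the decay of $D^3V$ alone.

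In case (iii) your argument covers only half of what is needed: showing that bounded-energy admissible approximations of $\varphi$ exist only when $\partial^2_{x_N}\varphi=0$ on $W$ restricts the candidate limit form domain, but to conclude that the limit operator is $H_{\Omega,D}$ you must prove that every weak limit $v$ of the solutions satisfies $D^\eta v=0$ on $W$ for all $|\eta|=2$ (the closedness, or liminf, half of Mosco convergence). ``The non-constancy of $b$ prevents any periodic cancellation'' is a heuristic, not a proof; the paper obtains the constraint from \eqref{idprep} and the results of \cite{casado10} applied to the vector fields $V^{ij}_\epsilon$. Case (i) is fine, since there the corrector has vanishing energy and weak convergence suffices. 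In summary, the alternative route could probably be carried through, but as written the proofs of (ii) and (iii) rest on unestablished steps that constitute the core of the theorem.
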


We note that the analysis of the cases $\alpha \le 3/2 $ is in spirit of the paper \cite{casado10} which is devoted to the Navier-Stokes system.  For  recent results concerning domain perturbation problems for higher order operators we refer to \cite{BuosoLamb, BuosoLamb2, BP, BurLamb, bula2012}.

\section{Proof of Theorem~\ref{mainthm}}

In this section we provide the proof the Theorem~\ref{mainthm}.
For simplicity, we shall always assume that $b\geq 0$, hence $\Omega \subset \Omega_{\epsilon }$ for all $\epsilon >0$.\\
Let $f_\epsilon\in L^2(\Omega_\epsilon)$ and $f\in L^2(\Omega)$ be such that $\norma{f_\epsilon}_{L^2(\Omega_\epsilon)}$ is uniformly bounded and $f_\epsilon \rightharpoonup f$ in $L^2(\Omega )$ as $\epsilon \to 0$. Let $v_\epsilon \in  W^{3,2}(\Omega_\epsilon) \cap W^{2,2}_0(\Omega_\epsilon)$ be such that
\begin{equation}
\label{eq: Poisson prblm}
H_{\Omega_\epsilon, I} v_\epsilon = f_\epsilon\, ,
\end{equation}
for all $\epsilon >0$ small enough.   We plan to pass to the limit in (\ref{eq: Poisson prblm}) as $\epsilon \to 0$ and prove that the limit problem is as in  Theorem~\ref{mainthm}.
Clearly, $\norma{v_\epsilon}_{W^{3,2}(\Omega_\epsilon)} \leq M$ for all $\epsilon > 0$ sufficiently small, hence, possibly passing to a subsequence, there exists $v \in W^{3,2}(\Omega)\cap W^{2,2}_0(\Omega)$ such that $v_\epsilon \rightharpoonup v$ in $W^{3,2}(\Omega )$ and $v_\epsilon \rightarrow v$ in $L^2(\Omega)$.
In order to use the weak formulation of  problem (\ref{eq: Poisson prblm}), we need to define a   suitable test function in $\Omega_{\epsilon}$ starting from a given test function in $\Omega$. Following the approach in \cite{ArrLamb}, this is done by means of an appropriate pullback operator. Namely, we consider a diffeomorphism $\Phi_\epsilon$ from $\Omega_{\epsilon}$ to $\Omega$ defined by $\Phi_{\epsilon}(\bar x, x_N)=(\bar x, x_N-h_\epsilon (\bar{x}, x_N))$ for all $(\bar x, x_N)\in \Omega_{\epsilon}$ where
\[
h_\epsilon (\bar{x}, x_N) =
\begin{cases}
0, &\textup{if $-1 \leq x_N \leq -\epsilon$},\\
g_\epsilon(\bar{x})\Big(\frac{x_N+\epsilon}{g_\epsilon(\bar{x})+\epsilon}\Big)^4, &\textup{if $-\epsilon \leq x_N \leq g_\epsilon(\bar{x})$}.
\end{cases}
\]
The map $\Phi_\epsilon$ is a diffeomorphism of class $C^3$, even though the highest order derivatives may not be uniformly bounded as $\epsilon \to 0$. Note in particular that  there exists a constant
$c > 0$ independent of $\epsilon$ such that for all $\epsilon > 0$ small enough we have
\[
\abs{h_\epsilon} \leq c \epsilon^\alpha, \quad \left \lvert \frac{\partial h_\epsilon}{\partial x_i} \right \rvert \leq c \epsilon^{\alpha - 1}, \quad \left \lvert \frac{\partial^2 h_\epsilon}{\partial x_i \partial x_j}  \right \rvert \leq c \epsilon^{\alpha - 2}, \quad \left \lvert\frac{\partial^3 h_\epsilon}{\partial x_i \partial x_j \partial x_k} \right \rvert \leq c \epsilon^{\alpha - 3}.
\]
Then we consider the pullback operator $T_\epsilon$ from $L^2(\Omega)$ to $L^2(\Omega_\epsilon)$ defined  by
$T_\epsilon \varphi = \varphi \circ \Phi_\epsilon$,
for all $\varphi\in L^2(\Omega)$.

Let $\varphi \in W^{3,2}(\Omega) \cap W^{2,2}_0(\Omega)$ be  fixed. Since $T_\epsilon \varphi \in W^{3,2}(\Omega_{\epsilon}) \cap W^{2,2}_0(\Omega_{\epsilon}) $, by \eqref{eq: Poisson prblm} we get
\begin{equation}
\label{eq: Poisson 1}
\int_{\Omega_\epsilon} D^3v_\epsilon : D^3 T_\epsilon \varphi \, \diff{x} + \int_{\Omega_\epsilon}v_\epsilon T_\epsilon \varphi\, \diff{x} = \int_{\Omega_\epsilon} f_\epsilon T_\epsilon \varphi\, \diff{x},
\end{equation}
and passing to the limit as $\epsilon \to 0$ we have that
\begin{equation}
\label{eq: limit easy terms}
\int_{\Omega_\epsilon} v_\epsilon T_\epsilon \varphi\, \diff{x} \to \int_{\Omega} v \varphi \,\diff{x}, \quad \int_{\Omega_\epsilon} f_\epsilon T_\epsilon \varphi\, \diff{x} \to \int_{\Omega} f \varphi \,\diff{x}.
\end{equation}
Now we consider the first integral in the left-hand side of \eqref{eq: Poisson 1} and set $K_\epsilon = W \times (-1, -\epsilon)$. By splitting the integral in three terms corresponding to $\Omega_\epsilon \setminus \Omega$, $\Omega \setminus K_\epsilon$ and $K_\epsilon$ and by arguing as in \cite[Section 8.3]{ArrLamb} one can show that
\begin{equation}
\label{eq: limit less easy terms}
\int_{K_\epsilon} D^3v_\epsilon : D^3 T_\epsilon \varphi \, \diff{x} \to \int_\Omega D^3 v : D^3 \varphi \, \diff{x}, \quad \int_{\Omega_\epsilon \setminus \Omega} D^3 v_\epsilon : D^3 T_\epsilon \varphi \, \diff{x} \to 0,
\end{equation}
as $\epsilon \to 0$. Hence, it remains to analyse the behaviour of the term $\int_{\Omega\setminus K_\epsilon} D^3v_\epsilon : D^3 T_\epsilon \varphi \, \diff{x} $. We distinguish now the three cases.

{\bf Case $\alpha >3/2$.}  In this case,  one can  prove that $\int_{\Omega\setminus K_\epsilon} D^3v_\epsilon : D^3 T_\epsilon \varphi \, \diff{x}\to  0$. Thus, by combining the previous limit relations, we get
$
\int_{\Omega } D^3v : D^3 \varphi \, \diff{x} + \int_{\Omega}v \varphi\, \diff{x} = \int_{\Omega } f \varphi\, \diff{x},
$
which proves statement (i).

{\bf Case $\alpha =3/2$.}  In this case,  the problem is more complicated and the proof of statement (ii) will follow from Theorems~\ref{thm: macroscopic limit} and \ref{last}.  The proofs of such theorems are based on  the unfolding method (see e.g., \cite{ciodamgri}). We recall now a few notions from homogenization theory.  For any $k \in \Z^{N-1}$ and $\epsilon >0$ we consider the small cell $C^k_\epsilon = \epsilon(k+Y)$, where as above $Y = \Bigl(-\frac{1}{2}, \frac{1}{2} \Bigr)^{N-1}$. Let $I_{W,\epsilon} = \{k \in \Z^{N-1} : C^k_\epsilon \subset W \}$ and $\widehat{W}_\epsilon = \bigcup_{k \in I_{W,\epsilon} } C^k_\epsilon $.
We set $Q_\epsilon = \widehat{W_\epsilon} \times (-\epsilon,0)$
and we split again the remaining integral  in two summands, namely
\begin{equation}\label{eq: integralsplit}
\int_{\Omega \setminus K_\epsilon} D^3v_\epsilon : D^3 T_\epsilon \varphi \, \diff{x} = \int_{\Omega \setminus (K_\epsilon \cup Q_\epsilon)} D^3v_\epsilon : D^3 T_\epsilon \varphi \, \diff{x} + \int_{Q_\epsilon} D^3v_\epsilon : D^3 T_\epsilon \varphi \, \diff{x}.
\end{equation}
Arguing as in \cite[Section 8.3]{ArrLamb} we get that
$
\int_{\Omega \setminus (K_\epsilon \cup Q_\epsilon)} D^3v_\epsilon : D^3 T_\epsilon \varphi \, \diff{x} \to 0
$
as $\epsilon \to 0$. Thus, it remains to study the limiting behaviour of the last summand in the right hand-side of \eqref{eq: integralsplit} and this is done by unfolding it. We recall the following

\begin{definition}
Let $u$ be a measurable real-valued function defined in $\Omega$. For any $\epsilon > 0$ sufficiently small the unfolding $\hat{u}$ of $u$ is the function defined on $\widehat{W}_\epsilon \times Y \times (-1/\epsilon, 0)$ by
$
\hat{u}(\bar{x}, \bar{y}, y_N) = u\Big( \epsilon\bigl[\frac{\bar{x}}{\epsilon}\bigr] + \epsilon y, \epsilon y_N \Big),
$
for almost all $(\bar{x}, \bar{y}, y_N)) \in \widehat{W}_\epsilon \times Y \times (-1/\epsilon, 0)$, where $\bigl[\frac{\bar{x}}{\epsilon}\bigr]$ denotes the integer part of the vector $\bar{x} \epsilon^{-1}$ with respect to $Y$, i.e., $\bar{x} \epsilon^{-1} = k$ if and only if $\bar{x} \in C^k_\epsilon$.
\end{definition}

The unfolding operator allows to `unfold' integrals by means of the well-known exact integration formula which in our case can be written as
\begin{equation}\label{exact}\int_{\widehat W_{\epsilon}\times (a, 0  ) }u(x)dx=\epsilon\int_{\widehat W_{\epsilon}\times Y\times (a/ \epsilon, 0  )  }\hat u (\bar x, y)d\bar xdy,
\end{equation}
for any $a\in [-1,0[$. This formula will be essential in computing the limit of   $\int_{Q_\epsilon} D^3v_\epsilon : D^3 T_\epsilon \varphi \, \diff{x}$ as $\epsilon\to 0$. Before doing this, we need two technical lemmas.
By $w^{3,2}_{Per_Y}(Y \times (-\infty, 0))$ we denote the space of functions $u$ in $W^{3,2}_{loc}({\mathbb{R}}^{N-1}\times (-\infty ,0))$ which are $Y$-periodic in the first $N-1$ variables and such that
$\| D^{\eta}u\|_{L^2(Y\times (-\infty , 0))}<\infty $ for all $|\eta|=3$.

\begin{lemma}
\label{lemma: unfolding convergence}
Let $v_\epsilon \in W^{3,2}(\Omega)$ with $\norma{v_\epsilon}_{W^{3,2}(\Omega)} < M$, for all $\epsilon > 0$. Let $V_\epsilon(\bar{x}, y) = \hat{v_\epsilon}(\bar{x}, y)-P(\hat{v_\epsilon}(\bar{x}, y)) $ for all $(\bar{x}, y) \in \widehat{W_\epsilon}\times Y\times (-1/\epsilon, 0)$,  where the operator $P$ is defined by
\begin{multline*}
P(w(\bar{x}, y))
 = \int_Y\biggl( w   (\bar{x}, \bar{y}, 0)  -   \sum_{\abs{\eta}=2}\int_Y D^\eta_{\bar y} w  (\bar{x}, \bar{y}, 0)\, \diff{\bar{y}}\, \frac{\bar y^\eta}{\eta!} \biggr)  \, \diff{\bar{y}}\\
 + \int_Y \nabla_y w (\bar{x}, \bar{y}, 0)\, \diff{\bar{y}} \cdot y + \sum_{\abs{\eta}=2}\int_Y D^\eta_y w(\bar{x}, \bar{y}, 0)\, \diff{\bar{y}}\, \frac{y^\eta}{\eta!} \, .
\end{multline*}
Then there exists  $\hat{v}\in L^2(W, w^{3,2}_{\textup{Per}_Y}(Y\times (-\infty,0)))$ such that
\begin{enumerate}[label=(\alph*)]
\item $\frac{D_y^{\eta}V_\epsilon}{\epsilon^{5/2}} \rightharpoonup D_y^{\eta}\hat{v}$ in $L^2(W\times Y \times (d,0))$ as $\epsilon \to 0$, for all $d<0$ and $\eta \in \N^N$, $\abs{\eta} \leq 2$.
\item $\frac{D_y^{\eta}V_\epsilon}{\epsilon^{5/2}} =\frac{D_y^{\eta}\hat{v_\epsilon}}{\epsilon^{5/2}}\rightharpoonup D_y^{\eta}\hat{v}$ in $L^2(W\times Y \times (-\infty,0))$ as $\epsilon \to 0$, for all $\eta \in \N^N$, $\abs{\eta} = 3$,
\end{enumerate}
where it is understood that functions $V_\epsilon, D_y^{\eta}V_\epsilon$ are extended by zero in the whole of $W\times Y \times (-\infty,0)$ outside their natural domain of definition $\widehat{W_\epsilon}\times Y\times (-1/\epsilon, 0) $.
\end{lemma}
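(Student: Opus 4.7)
The plan combines two ingredients: the scaling identity provided by the exact integration formula \eqref{exact}, and a Poincar\'e--Deny--Lions-type inequality tailored to the projection $P$.

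First, by the chain rule, $D_y^\eta \hat v_\epsilon(\bar x,\bar y, y_N) = \epsilon^{|\eta|}(D^\eta v_\epsilon)(\epsilon[\bar x/\epsilon]+\epsilon \bar y, \epsilon y_N)$, so applying \eqref{exact} to $|D^\eta v_\epsilon|^2$ on $\widehat W_\epsilon \times (-1,0)$ yields
\begin{equation*}
\|D_y^\eta \hat v_\epsilon\|^2_{L^2(\widehat W_\epsilon \times Y \times (-1/\epsilon, 0))} = \epsilon^{2|\eta|-1}\|D^\eta v_\epsilon\|^2_{L^2(\widehat W_\epsilon \times (-1, 0))} \le \epsilon^{2|\eta|-1}M^2.
\end{equation*}
After extending by zero, division by $\epsilon^5$ gives, in the case $|\eta|=3$, a uniform $L^2(W \times Y \times (-\infty, 0))$-bound on $D_y^\eta \hat v_\epsilon/\epsilon^{5/2}$, and hence a weakly convergent subsequence, establishing (b) modulo identifying the limit.

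The second ingredient is a Poincar\'e inequality for $w-P(w)$ on the bounded slab $Y\times(d,0)$, for each $d<0$: the operator $P$ is an affine-polynomial projection of order $2$, since $P(w)(\bar x, y)$ is a polynomial in $y$ of degree at most $2$ (whence $D^3_y Pw = 0$) and, by inspection of the definition, $P$ fixes the polynomials of degree $\le 2$, so $P^2=P$. The classical Deny--Lions lemma on the bounded Lipschitz domain $Y \times (d,0)$ then yields
\begin{equation*}
\|w - P(w)\|^2_{W^{2,2}(Y \times (d, 0))} \le C_d \sum_{|\eta|=3} \|D_y^\eta w\|^2_{L^2(Y \times (d,0))},
\end{equation*}
with $C_d$ independent of $w$. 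Applying this slice-wise at fixed $\bar x$ to $\hat v_\epsilon(\bar x,\cdot)/\epsilon^{5/2}$ and integrating over $\bar x \in W$, together with the scaling bound above, produces a uniform $L^2(W\times Y\times (d,0))$-bound on $D_y^\eta V_\epsilon/\epsilon^{5/2}$ for $|\eta|\le 2$, which yields (a) modulo identifying the limit.

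Weak compactness then provides, along a subsequence, weak limits of $D_y^\eta V_\epsilon/\epsilon^{5/2}$ in $L^2(W\times Y\times(d,0))$ for $|\eta|\le 2$ and of $D_y^\eta \hat v_\epsilon/\epsilon^{5/2}$ in $L^2(W\times Y\times(-\infty,0))$ for $|\eta|=3$. A diagonal extraction over a sequence $d_n\to-\infty$ and a distributional identification of derivatives (by testing against $C_c^\infty$-functions supported in $W\times Y\times(-\infty,0)$) produce a single $\hat v$ whose distributional derivatives coincide with the respective weak limits. The $Y$-periodicity of $\hat v(\bar x,\cdot,y_N)$ in $\bar y$ comes from the unfolding structure: a shift of $\bar y$ by a lattice vector is compensated by a shift of $\bar x$ by $\epsilon$ inside $\widehat W_\epsilon$, and this $\bar x$-shift vanishes in the $L^2$-limit. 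The main technical obstacle is the verification that $P$ is a continuous projection onto the polynomials of degree $\le 2$ (so that Deny--Lions applies); once this is checked by direct computation from the moments defining $P$, the rescaling, the slice-wise Poincar\'e estimate, the weak compactness argument, and the standard periodicity argument of the unfolding method complete the proof.
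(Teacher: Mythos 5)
Your derivation of the a priori bounds and of statements (a), (b) modulo periodicity follows the same route as the paper: the exact integration formula \eqref{exact} gives $\norma{D^\eta_y\hat v_\epsilon}^2_{L^2}=\epsilon^{2\abs{\eta}-1}\norma{D^\eta v_\epsilon}^2_{L^2}$, hence a uniform bound on $\epsilon^{-5/2}D^\eta_y\hat v_\epsilon$ for $\abs{\eta}=3$, and since $P$ is a projection onto polynomials of degree at most $2$ in $y$, a Deny--Lions/Poincar\'e--Wirtinger argument on each slab $Y\times(d,0)$, applied slice-wise in $\bar x$, controls the lower-order derivatives of $V_\epsilon/\epsilon^{5/2}$; weak compactness and a diagonal extraction then produce $\hat v$ along a subsequence. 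One small caveat: since $P$ is built from traces at $y_N=0$ of derivatives up to order two, it is bounded on $W^{3,2}(Y\times(d,0))$ but not on $W^{2,2}$, so your inequality should be obtained by writing $w-Pw=(w-p)-P(w-p)$ for $p$ a degree-two polynomial and invoking the trace theorem before taking the infimum over $p$; this is a routine adjustment.

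The genuine gap is the $Y$-periodicity of $\hat v$ in $\bar y$, which you settle with the sentence that a shift of $\bar y$ by a lattice vector is compensated by a shift of $\bar x$ by $\epsilon$ which ``vanishes in the $L^2$-limit''. That shift identity holds for the plain unfolded function $\hat v_\epsilon$, but not for $V_\epsilon=\hat v_\epsilon-P(\hat v_\epsilon)$: the subtracted polynomial changes from cell to cell, so the traces of $V_\epsilon/\epsilon^{5/2}$ on matching faces of adjacent cells differ by the inter-cell jumps of the coefficients of $P(\hat v_\epsilon)$ divided by $\epsilon^{5/2}$, and for the zeroth- and first-order coefficients these normalized jumps are of size roughly $\epsilon^{-3/2}$ and $\epsilon^{-1/2}$ times bounded trace quantities, hence not a priori negligible; moreover their control at second order would formally require traces of third derivatives, which do not exist for $v_\epsilon\in W^{3,2}(\Omega)$. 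That the heuristic cannot be correct as stated is shown by the classical unfolding situation: for $u_\epsilon$ bounded in $W^{1,2}$ the normalized sequence $\epsilon^{-1}(\hat u_\epsilon-\int_Y\hat u_\epsilon)$ converges to a limit which is periodic only up to an affine term in $\bar y$, even though the same ``shift by $\epsilon$'' reasoning would apply verbatim. The paper closes this point with a two-step argument that you would need (or an equivalent quantitative control of the inter-cell jumps): first, an argument in the spirit of \cite[Lemma 4.3]{casado10} applied to $D^2V_\epsilon$ -- at this order the mismatch between adjacent cells is governed by $\epsilon$ times the uniformly bounded third derivatives -- gives periodicity of $\nabla_y\hat v$, so the possible periodicity defect of $\hat v$ is constant in $y$; second, the normalization built into $P$, namely $\int_Y\nabla_yV_\epsilon(\bar x,\bar y,0)\,\diff\bar y=0$, passes to the limit and forces $\int_Y\nabla_y\hat v(\bar x,\bar y,0)\,\diff\bar y=0$, which kills that constant defect and yields periodicity of $\hat v$ itself. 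Without this step (and, relatedly, without some argument for the periodicity claim at all orders), your proof of membership in $L^2(W,w^{3,2}_{\textup{Per}_Y}(Y\times(-\infty,0)))$ is incomplete.
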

\begin{proof}
The proof is similar to the one in~\cite[Lemma 8.9]{ArrLamb}.  Using  formula (\ref{exact}), one can easily prove that
 $\left\| \epsilon^{-5/2}D^{\eta }_yV_{\epsilon}\right\|_{L^2(W\times Y\times (-\infty , 0))
} $ is uniformly bounded with respect to $\epsilon$ for all $|\eta|=3$.
Note that the operator $P$  is a projector on the space of polynomials   of the second degree in the variable $y$.  Thus,  a standard argument exploiting  a Poincar\'{e}-Wirtinger-type inequality implies the existence of a real-valued function $\hat v$ defined on $W\times Y\times (-\infty , 0)$ which admits weak derivatives up to the third order locally in the variable $y$,   such that statements (a) and (b) hold.
In order to prove the periodicity of $\hat v$ in $\bar y$, we can apply to $D^2 V_\epsilon$ an argument similar to the one contained in Lemma 4.3 in \cite{casado10}  to obtain that $\nabla_y \hat{v}$ is periodic. Then we find out that $\hat{v}$ is also periodic because
$\int_{Y}\nabla_y \hat{v}(\bar{x},\bar{y},0) \diff{\bar{y}} = 0$, being this true for
all the functions $V_\epsilon$. \end{proof}

\begin{lemma}
\label{lemma: convergence b}
For all $y \in Y \times (-1,0)$ and $i,j,k = 1,\dots,N$ the functions $\hat{h}_\epsilon(\bar{x},y)$, $\widehat{\frac{\partial h_\epsilon}{\partial x_i}}(\bar{x},y)$, $\widehat{\frac{\partial^2 h_\epsilon}{\partial x_i\partial x_j}}(\bar{x},y)$ and $\widehat{\frac{\partial^3 h_\epsilon}{\partial x_i\partial x_j \partial x_k}}(\bar{x},y)$ are independent of $\bar{x}$. Moreover, $\hat{h}_\epsilon(\bar{x},y) = O(\epsilon^{3/2})$, $\widehat{\frac{\partial h_\epsilon}{\partial x_i}}(\bar{x},y) = O(\epsilon^{1/2})$ as $\epsilon \to 0$,
\[
\epsilon^{1/2} \widehat{\frac{\partial^2 h_\epsilon}{\partial x_i\partial x_j}} (\bar{x},y) \to \frac{\partial^2 (b(\bar{y})(y_N + 1)^4)}{\partial y_i \partial y_j},
\]
as $\epsilon \to 0$, for all $i,j = 1,\dots,N$, uniformly in $y \in Y \times (-1,0)$, and
\[
\epsilon^{3/2} \widehat{\frac{\partial^3 h_\epsilon}{\partial x_i\partial x_j \partial x_k}} (\bar{x},y) \to \frac{\partial^3 (b(\bar{y})(y_N + 1)^4)}{\partial y_i \partial y_j \partial y_k},
\]
as $\epsilon \to 0$, for all $i,j,k = 1,\dots,N$, uniformly in $y \in Y \times (-1,0)$.
\end{lemma}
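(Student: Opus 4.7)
The plan is to exploit the explicit formula for $h_\epsilon$ together with the $Y$-periodicity of $b$ to obtain a completely explicit expression for $\hat h_\epsilon$ on the relevant slab, which will turn out not to depend on $\bar x$. Indeed, for $y_N\in(-1,0)$ we have $\epsilon y_N\in(-\epsilon,0)$, so only the second branch of the piecewise definition of $h_\epsilon$ is active. Writing $k=[\bar x/\epsilon]\in\Z^{N-1}$ and using $g_\epsilon(\epsilon k+\epsilon \bar y)=\epsilon^{\alpha}b(k+\bar y)=\epsilon^{\alpha}b(\bar y)$ by $Y$-periodicity, a direct substitution gives
\[
\hat h_\epsilon(\bar x,\bar y,y_N)=\epsilon^{\alpha}\,b(\bar y)\,\frac{(y_N+1)^4}{\bigl(\epsilon^{\alpha-1}b(\bar y)+1\bigr)^{4}}=:\epsilon^{\alpha}F(\bar y,y_N,\epsilon),
\]
which depends only on $(\bar y,y_N)$ (and parametrically on $\epsilon$). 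Since $\alpha=3/2>1$ and $b\in C^\infty$, the denominator stays bounded away from zero uniformly in $\epsilon$, so $F\to b(\bar y)(y_N+1)^4$ in $C^{\infty}$ on $Y\times(-1,0)$. This immediately yields $\hat h_\epsilon=O(\epsilon^{3/2})$.

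For the derivatives, the key observation is the chain-rule identity intrinsic to unfolding: since $\hat u(\bar x,\bar y,y_N)=u(\epsilon[\bar x/\epsilon]+\epsilon\bar y,\epsilon y_N)$, we have, for any multi-index $\eta$ involving only $y$-variables,
\[
\widehat{D^{\eta}_x h_\epsilon}(\bar x,\bar y,y_N)=\epsilon^{-|\eta|}\,D^{\eta}_y \hat h_\epsilon(\bar x,\bar y,y_N)=\epsilon^{\alpha-|\eta|}\,D^{\eta}_y F(\bar y,y_N,\epsilon).
\]
Plugging in $\alpha=3/2$ gives the three scalings in the statement: $\epsilon^{1/2}$ for $|\eta|=1$, $\epsilon^{-1/2}$ for $|\eta|=2$ (so multiplication by $\epsilon^{1/2}$ produces $D^{\eta}_y F$) and $\epsilon^{-3/2}$ for $|\eta|=3$ (so multiplication by $\epsilon^{3/2}$ produces $D^{\eta}_y F$). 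The $\bar x$-independence is inherited from that of $\hat h_\epsilon$, and the uniform convergence follows from differentiating $F$ term by term, using that $\partial F/\partial(\epsilon^{\alpha-1})$ and all mixed derivatives in $(\bar y,y_N)$ are polynomials in $b(\bar y)$, $\partial^\beta b(\bar y)$ and $(y_N+1)$ divided by a positive power of the bounded-below factor $\epsilon^{\alpha-1}b(\bar y)+1$.

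No real obstacle is expected: the whole computation reduces to elementary differentiation of a rational expression in $\epsilon^{\alpha-1}$ whose only non-trivial ingredient is the $Y$-periodicity of $b$ that cancels the dependence on $k=[\bar x/\epsilon]$. The only point that requires a little care is to verify that $C^\infty$-convergence of $F$ in $\epsilon$ really is uniform on $Y\times(-1,0)$; this amounts to noting that $\|b\|_{C^m(Y)}$ is finite for every $m$ (by $Y$-periodicity and smoothness) and that the denominator satisfies $\epsilon^{\alpha-1}b(\bar y)+1\ge 1/2$ for all $\epsilon$ small, so each derivative of $F$ is an elementary continuous function of $(\bar y,y_N,\epsilon)$ on a compact set — whence the uniform convergence as $\epsilon\to 0$.
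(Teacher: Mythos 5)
Your proposal is correct and is essentially the computation the paper itself defers to (``easy but lengthy calculations, which can be carried out as in \cite[Lemma 8.27]{ArrLamb}''): you unfold $h_\epsilon$ explicitly, use the $Y$-periodicity of $b$ and the assumption $b\ge 0$ (so that $\epsilon y_N\in(-\epsilon,0)$ lies in the second branch and the denominator $\epsilon^{\alpha-1}b(\bar y)+1\ge 1$) to get the $\bar x$-independent formula $\hat h_\epsilon=\epsilon^{3/2}F(\bar y,y_N,\epsilon)$, and then the commutation identity $\widehat{D_x^{\eta}h_\epsilon}=\epsilon^{-|\eta|}D_y^{\eta}\hat h_\epsilon$ delivers all the stated scalings and uniform limits at once. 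This is a clean and complete way of organizing exactly the calculation the authors have in mind, so no gap.
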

\begin{proof}
It is a matter of easy but lengthy calculations, which can be carried out as in \cite[Lemma 8.27]{ArrLamb}.
\end{proof}

Now we are ready to prove the following

\begin{theorem}
\label{thm: macroscopic limit}
Let $M$ be a positive real number. Let $f_\epsilon \in L^2(\Omega_\epsilon)$, $\norma{f_\epsilon}_{L^2(\Omega_\epsilon)} < M$ for all $\epsilon > 0$ and $f\in L^2(\Omega)$ be such that $f_\epsilon \rightharpoonup f$ in $L^2(\Omega)$. Let $v_\epsilon\in W^{3,2}(\Omega_\epsilon) \cap W^{2,2}_0(\Omega_\epsilon)$ be the solutions to $H_{\Omega_\epsilon} v_\epsilon = f_\epsilon$. Then, up to a subsequence, there exists $v\in W^{3,2}(\Omega) \cap W^{2,2}_0(\Omega)$ and $\hat{v} \in L^2(W,w^{3,2}_{Per_Y}(Y\times(-\infty,0)))$ such that $v_\epsilon\rightharpoonup v$ in $W^{3,2}(\Omega)$, $v_\epsilon \to v$ in $L^2(\Omega)$, statements $(a)$ and $(b)$ in Lemma~\ref{lemma: unfolding convergence} hold, and such that for each $\varphi \in W^{3,2}(\Omega) \cap W^{2,2}_0(\Omega)$ the following holds
\footnotesize{
\begin{eqnarray}\label{weakmacro}\lefteqn{  \int_{\Omega} D^3 v : D^3\varphi + u\varphi \,\diff{x}
- 3 \int_W \int_{Y \times (-1,0)} D^2_y\Big(\frac{\partial \hat{v}}{\partial y_N} \Big) : D^2_y (b(\bar{y})(1 + y_N)^4) \,\diff{y}\, \frac{\partial^2\varphi}{\partial x_N^2}(\bar{x}, 0) \diff{\bar{x}}}\nonumber   \\ & &
\qquad\qquad
- \int_W \int_{Y \times (-1,0)} y_N (D^3_y(\hat{v}) : D^3 (b(\bar{y})(1+y_N)^4) \,\diff{y}\, \frac{\partial^2\varphi}{\partial x_N^2}(\bar{x}, 0) \diff{\bar{x}}
= \int_{\Omega} f \varphi\, \diff{x}.
\end{eqnarray}
}
\end{theorem}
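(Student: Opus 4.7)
The reduction already carried out in the text isolates the only non-trivial contribution to analyse: the limit, as $\epsilon \to 0$, of $\int_{Q_\epsilon} D^3 v_\epsilon : D^3 T_\epsilon \varphi \, dx$, and what must be shown is that this limit coincides with the sum of the two additional integrals on the left-hand side of \eqref{weakmacro}. The existence of $v$ and the unfolding limits of $\hat v$ are already supplied by the text and by Lemma \ref{lemma: unfolding convergence}.

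Using the exact integration formula \eqref{exact} together with the chain-rule identity $\widehat{\partial/\partial x_i} = \epsilon^{-1} \partial/\partial y_i$, applied three times to each factor, the integral over $Q_\epsilon$ rewrites as
\begin{equation*}
\int_{Q_\epsilon} D^3 v_\epsilon : D^3 T_\epsilon \varphi \, dx = \int_{\widehat{W}_\epsilon \times Y \times (-1,0)} \bigl( \epsilon^{-5/2} D^3_y \hat v_\epsilon \bigr) : \bigl( \epsilon^{-5/2} D^3_y \widehat{T_\epsilon \varphi} \bigr) \, d\bar x\, dy.
\end{equation*}
By Lemma \ref{lemma: unfolding convergence}(b), the first factor converges weakly in $L^2$ to $D^3_y \hat v$. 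It therefore suffices to prove strong $L^2$-convergence of the second factor to an explicit limit and to pair the two convergences.

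To compute $D^3_y \widehat{T_\epsilon \varphi}$, I may assume $\varphi \in C^\infty(\overline\Omega) \cap W^{2,2}_0(\Omega)$ by density, and expand $\partial^3 T_\epsilon \varphi / \partial x_i \partial x_j \partial x_k$ via Fa\`a di Bruno. Using that the first $N-1$ components of $\Phi_\epsilon$ are the identity and that $\partial^\eta \Phi_{\epsilon, N} = -\partial^\eta h_\epsilon$ for $|\eta| \geq 2$, the expansion reduces to a ``triple-Jacobian'' piece involving only derivatives of $\varphi$, three ``mixed'' pieces of the form $-\partial^2 h_\epsilon / \partial x_a \partial x_b \cdot \partial^2 \varphi / \partial x_N \partial x_c (\Phi_\epsilon)$ (one for each $2+1$ partition of $\{i,j,k\}$), and a ``top'' piece $-\partial^3 h_\epsilon / \partial x_i \partial x_j \partial x_k \cdot \partial \varphi / \partial x_N (\Phi_\epsilon)$, modulo corrections carrying at least one extra factor of $\partial h_\epsilon = O(\epsilon^{1/2})$. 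I then Taylor-expand each $\varphi$-factor around $(\bar x, 0)$, using the trace conditions $\varphi = \partial \varphi / \partial x_l = 0$ on $W$ for every $l = 1, \dots, N$. In particular $\partial \varphi / \partial x_N (\Phi_\epsilon) = \partial^2 \varphi / \partial x_N^2 (\bar x, 0) (x_N - h_\epsilon) + O((x_N - h_\epsilon)^2)$, which is exactly what tames the worst $\epsilon^{-3/2}$-blow-up of $\partial^3 h_\epsilon$. After unfolding and invoking Lemma \ref{lemma: convergence b} (with the shorthand $H(\bar y, y_N) := b(\bar y)(1 + y_N)^4$), one obtains
\begin{equation*}
\epsilon^{-5/2} \frac{\partial^3 \widehat{T_\epsilon \varphi}}{\partial y_i \partial y_j \partial y_k} \longrightarrow -\sum_{\mathrm{cyc}(i,j,k)} \frac{\partial^2 H}{\partial y_a \partial y_b} \frac{\partial^2 \varphi}{\partial x_N \partial x_c}(\bar x, 0) - y_N \frac{\partial^3 H}{\partial y_i \partial y_j \partial y_k} \frac{\partial^2 \varphi}{\partial x_N^2}(\bar x, 0)
\end{equation*}
strongly in $L^2$, the cyclic sum running over the three $2+1$ partitions of $\{i,j,k\}$.

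Taking the scalar product with $D^3_y \hat v$ and summing over $(i,j,k)$, the $y_N$-term yields the second strange integral of \eqref{weakmacro} directly. By the symmetry of $D^3_y \hat v$ under index permutations, the three mixed contributions collapse to $-3 \sum_i \partial^2 \varphi / \partial x_N \partial x_i (\bar x, 0) \cdot [ D^2_y ( \partial \hat v / \partial y_i ) : D^2_y H ]$. The crucial observation is that, since $\partial \varphi / \partial x_N \equiv 0$ on $W$, the tangential derivatives $\partial^2 \varphi / \partial x_N \partial x_i (\bar x, 0)$ vanish for every $i < N$; hence only $i = N$ survives and reproduces exactly the first strange integral of \eqref{weakmacro}. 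Combined with \eqref{eq: limit easy terms}, \eqref{eq: limit less easy terms}, and the vanishing of the integral over $\Omega \setminus (K_\epsilon \cup Q_\epsilon)$, this proves \eqref{weakmacro}. The main technical obstacle is the third step: showing that among the many $O(\epsilon^{-3/2})$ and $O(\epsilon^{-1/2})$ terms produced by Fa\`a di Bruno, only the two identified above survive after multiplication by $\epsilon^{1/2}$, with everything else being $o(1)$ strongly in $L^2$ thanks to the Taylor-remainder bounds enforced by the vanishing boundary data of $\varphi$.
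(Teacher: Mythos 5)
Your proposal is correct and follows essentially the same route as the paper: unfold $\int_{Q_\epsilon} D^3 v_\epsilon : D^3 T_\epsilon\varphi\,\diff{x}$ with the exact integration formula \eqref{exact}, pair the weak limit of $\epsilon^{-5/2}D^3_y\hat v_\epsilon$ from Lemma~\ref{lemma: unfolding convergence}(b) with the strong limit of $\epsilon^{-5/2}D^3_y\widehat{T_\epsilon\varphi}$ obtained from Lemma~\ref{lemma: convergence b} and the vanishing traces of $\varphi$, and your bookkeeping of the surviving $2{+}1$ and third-derivative-of-$h_\epsilon$ terms reproduces exactly the two strange integrals in \eqref{weakmacro}. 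The only step worth making explicit is the density reduction to smooth $\varphi$: it needs the bound $\bigl|\int_{Q_\epsilon} D^3 v_\epsilon : D^3 T_\epsilon\psi\,\diff{x}\bigr|\le C\norma{\psi}_{W^{3,2}(\Omega)}$ uniformly in $\epsilon$ for $\psi\in W^{3,2}(\Omega)\cap W^{2,2}_0(\Omega)$ (which holds, since $\psi=\nabla\psi=0$ on $W$ tames the $D^2h_\epsilon$ and $D^3h_\epsilon$ factors), after which both sides of \eqref{weakmacro} pass to the limit by continuity of the trace of $\partial^2\varphi/\partial x_N^2$ on $W$.
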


\begin{proof} By using formula (\ref{exact}), one can write   $\int_{Q_\epsilon} D^3 v_{\epsilon} : D^3(T_\epsilon \varphi) \, \diff{x}$ as an integral over $\widehat W_{\epsilon}\times Y\times (-1 , 0)$ for suitable combinations of derivatives of  $\hat v_{\epsilon }$ and $\hat T_\epsilon \varphi$. Then using Lemmas~\ref{lemma: unfolding convergence},~\ref{lemma: convergence b}, one can prove that the limit of $\int_{Q_\epsilon} D^3 v_{\epsilon} : D^3(T_\epsilon \varphi) \, \diff{x}$ as $\epsilon \to 0$ equals
\small{
\begin{multline}\label{strangeterm}
 - 3 \int_W \int_{Y \times (-1,0)} D^2_y\Big(\frac{\partial \hat{v}}{\partial y_N} \Big) : D^2_y (b(\bar{y})(1 + y_N)^4) \,\diff{y}\, \frac{\partial^2\varphi}{\partial x_N^2}(\bar{x}, 0) \diff{\bar{x}}\\
- \int_W \int_{Y \times (-1,0)} y_N (D^3_y(\hat{v}) : D^3 (b(\bar{y})(1+y_N)^4) \,\diff{y}\, \frac{\partial^2\varphi}{\partial x_N^2}(\bar{x}, 0) \diff{\bar{x}}.
\end{multline}}
\normalsize This combined with (\ref{eq: limit easy terms}) and  (\ref{eq: limit less easy terms}) allows to pass to the limit in  (\ref{eq: Poisson 1}) and obtain the validity of (\ref{weakmacro}).
\end{proof}

The term (\ref{strangeterm}) appearing in (\ref{weakmacro}) plays the role of the so-called strange term, typical of many homogenization problems. We plan to write it in a more explicit way in order to complete the proof of statement (ii) in Theorem~\ref{mainthm}.  To do so, we characterise $\hat v$ as the solution to a suitable boundary value problem by proceeding as follows.
Let $\psi \in C^{\infty}(\overline{W} \times \overline{Y} \times ]-\infty, 0])$ be such that $\supp{\psi} \subset C \times \overline{Y} \times [d,0]$ for some compact set $C\subset W$ and $d\in]-\infty, 0[$ and such that $\psi(\bar{x},\bar{y},0) = \nabla\psi(\bar{x},\bar{y},0) = 0$ for all $(\bar{x},\bar{y})\in W \times Y$. Assume also $\psi$ to be $Y$-periodic in the variable $\bar{y}$. We set
$
\psi_\epsilon(x) = \epsilon^{\frac{5}{2}} \psi \Big(\bar{x},\frac{\bar{x}}{\epsilon}, \frac{x_N}{\epsilon}\Big),
$
for all $\epsilon >0$, $x \in W \times ]-\infty, 0]$. Then, by the $Y$-periodicity and the vanishing conditions imposed on $\psi$, $T_\epsilon \psi_\epsilon \in W^{3,2}(\Omega_\epsilon)\cap W^{2,2}_0(\Omega_\epsilon)$ for sufficiently small $\epsilon >0$, hence we can use it in the weak formulation of the problem in $\Omega_\epsilon$, getting
\begin{equation} \label{twoscaleweak}
\int_{\Omega_\epsilon} D^3v_\epsilon : D^3T_\epsilon \psi_\epsilon\,\diff{x} + \int_{\Omega_\epsilon} v_\epsilon T_\epsilon \psi_\epsilon \,\diff{x} = \int_{\Omega_\epsilon} f_\epsilon T_\epsilon \psi_\epsilon \,\diff{x}.
\end{equation}
Passing to the limit in (\ref{twoscaleweak}) yields the limit problem for $\hat v$.

\begin{theorem}
\label{thm: conditions on v}
Let $\hat{v} \in L^2(W,w^{3,2}_{Per_Y}(Y\times(-\infty,0)))$ be the function from Theorem~\ref{thm: macroscopic limit}. Then
\begin{equation}\label{weakmicro}
\int_{W\times Y \times (-\infty,0)} D^3_y\hat{v}(\bar{x},y) : D^3_y\psi (\bar{x},y) \diff{\bar{x}} \diff{y} = 0,
\end{equation}
for all $\psi \in L^2(W,w^{3,2}_{Per_Y}(Y\times(\infty,0)))$ such that $\psi(\bar{x},\bar{y},0) = \nabla\psi(\bar{x},\bar{y},0)= 0$ on $W\times Y$. Moreover, for any $i,j=1,\dots, N-1$, we have
\begin{equation}
\label{eq: Casado1}
\frac{\partial^2\hat{v}}{\partial y_i \partial y_N}(\bar{x}, \bar{y},0) = \frac{\partial b}{\partial y_i}(\bar{y}) \frac{\partial^2 v}{\partial x^2_N}(\bar{x}, 0) \quad\quad \textup{on $W \times Y$},
\end{equation}
and
\begin{equation}
\label{eq: Casado2}
\frac{\partial^2\hat{v}}{\partial y_i \partial y_j}(\bar{x}, \bar{y},0) = 0 \quad\quad \textup{on $W \times Y$}.
\end{equation}

\end{theorem}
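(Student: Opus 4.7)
The plan is to establish the three claims in two independent stages. In the first stage I would derive the weak microscopic equation \eqref{weakmicro} by inserting the oscillating test function $T_\epsilon\psi_\epsilon$ into the weak formulation on $\Omega_\epsilon$ and passing to the limit by unfolding. In the second stage I would extract the pointwise boundary identities \eqref{eq: Casado1} and \eqref{eq: Casado2} at $y_N=0$ from the Dirichlet constraints $v_\epsilon=\nabla v_\epsilon=0$ on the oscillating upper boundary $\{x_N=g_\epsilon(\bar{x})\}$, via tangential differentiation combined with unfolding.

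For the first stage, the conditions $\psi(\bar{x},\bar{y},0)=\nabla\psi(\bar{x},\bar{y},0)=0$ on $W\times Y$, together with the $Y$-periodicity in $\bar{y}$ and compact $\bar{x}$-support, guarantee that $T_\epsilon\psi_\epsilon\in W^{3,2}(\Omega_\epsilon)\cap W^{2,2}_0(\Omega_\epsilon)$ for $\epsilon$ small; testing \eqref{eq: Poisson prblm} against $T_\epsilon\psi_\epsilon$ produces \eqref{twoscaleweak}. Since $\psi_\epsilon=O(\epsilon^{5/2})$ pointwise and supported in a strip of vertical thickness $O(\epsilon)$, Cauchy--Schwarz shows that the terms $\int_{\Omega_\epsilon}v_\epsilon T_\epsilon\psi_\epsilon\,\diff{x}$ and $\int_{\Omega_\epsilon}f_\epsilon T_\epsilon\psi_\epsilon\,\diff{x}$ both tend to zero. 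For the principal term I would localize as in Theorem~\ref{thm: macroscopic limit}: contributions from $K_\epsilon$, $\Omega_\epsilon\setminus\Omega$ and $\Omega\setminus(K_\epsilon\cup Q_\epsilon)$ are negligible because $T_\epsilon\psi_\epsilon$ is concentrated near $\{x_N=0\}$. On $Q_\epsilon$, after applying \eqref{exact} and expanding the chain rule for $T_\epsilon\psi_\epsilon$, the only contribution surviving in the limit is (schematically) $\epsilon^{-5/2}\int D^3_y\hat{v}_\epsilon:D^3_y\psi\,\diff{\bar{x}}\,\diff{y}$, all other terms carrying extra positive powers of $\epsilon$ by Lemma~\ref{lemma: convergence b}. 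Invoking Lemma~\ref{lemma: unfolding convergence}(b) then yields \eqref{weakmicro}.

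For the second stage, the Dirichlet constraints give $v_\epsilon(\bar{x},g_\epsilon(\bar{x}))\equiv 0$ and $\partial_{x_j}v_\epsilon(\bar{x},g_\epsilon(\bar{x}))\equiv 0$ for all $\bar{x}\in W$ and $j=1,\dots,N$. Differentiating the $j=N$ identity in $\bar{x}_i$ and using $\partial g_\epsilon/\partial x_i=\epsilon^{1/2}(\partial b/\partial y_i)(\bar{x}/\epsilon)$ produces
\[
\frac{\partial^2 v_\epsilon}{\partial x_i\partial x_N}(\bar{x},g_\epsilon(\bar{x}))=-\epsilon^{1/2}\frac{\partial b}{\partial y_i}\Bigl(\frac{\bar{x}}{\epsilon}\Bigr)\frac{\partial^2 v_\epsilon}{\partial x_N^2}(\bar{x},g_\epsilon(\bar{x}));
\]
differentiating the $j=k$ identity ($k<N$) in $\bar{x}_i$ and substituting the previous relation gives, for $i,k=1,\dots,N-1$,
\[
\frac{\partial^2 v_\epsilon}{\partial x_i\partial x_k}(\bar{x},g_\epsilon(\bar{x}))=\epsilon\,\frac{\partial b}{\partial y_i}\Bigl(\frac{\bar{x}}{\epsilon}\Bigr)\frac{\partial b}{\partial y_k}\Bigl(\frac{\bar{x}}{\epsilon}\Bigr)\frac{\partial^2 v_\epsilon}{\partial x_N^2}(\bar{x},g_\epsilon(\bar{x})).
\]
After unfolding, using a first-order Taylor expansion in $y_N$ to transfer each identity from $y_N=\epsilon^{1/2}b(\bar{y})$ down to $y_N=0$, dividing by $\epsilon^{5/2}$, and passing to the limit via Lemma~\ref{lemma: unfolding convergence}(a) together with $\partial^2 v_\epsilon/\partial x_N^2(\bar{x},0)\to\partial^2 v/\partial x_N^2(\bar{x},0)$ and the mean-zero identity $\int_Y\partial b/\partial y_i\,\diff{\bar{y}}=0$ (which absorbs the polynomial correction $P(\hat{v}_\epsilon)$), one arrives at \eqref{eq: Casado1} and \eqref{eq: Casado2}.

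The main obstacle is the rigorous passage to the trace at $y_N=0$: Lemma~\ref{lemma: unfolding convergence}(a) only provides weak $L^2$-convergence of the rescaled second derivatives on sets $W\times Y\times(d,0)$ with $d<0$, whereas the identities above live on the oscillating interface $y_N=\epsilon^{1/2}b(\bar{y})$. Upgrading these convergences to genuine traces on $y_N=0$ requires using the local regularity of $\hat{v}$ inherited from \eqref{weakmicro} and bookkeeping carefully the second-degree polynomial correction $P(\hat{v}_\epsilon)$ during the passage to the limit, in the spirit of \cite[Lemma~4.3]{casado10}.
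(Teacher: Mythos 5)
Your first stage is essentially the paper's own argument for \eqref{weakmicro}: the same oscillating test functions $T_\epsilon\psi_\epsilon$ inserted in \eqref{twoscaleweak}, the same elimination of the lower-order terms and of the contributions away from the strip, and the identification of the surviving unfolded term through Lemma~\ref{lemma: unfolding convergence}(b) (the paper delegates the chain-rule bookkeeping to a variant of \cite[Lemma 8.47]{ArrLamb}, but the content is the same). Your starting point for the boundary identities is also the paper's: differentiating $\nabla v_\epsilon(\bar x,g_\epsilon(\bar x))=0$ tangentially is exactly \eqref{idprep}.

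The problem is the second stage, and it is precisely the step you yourself flag as ``the main obstacle'': your proposal does not close it, and it is the whole content of \eqref{eq: Casado1}--\eqref{eq: Casado2}. Lemma~\ref{lemma: unfolding convergence}(a) gives only weak $L^2(W\times Y\times(d,0))$ convergence of $\epsilon^{-5/2}D^2_yV_\epsilon$; weak interior convergence passes neither to traces on $\{y_N=0\}$ nor to values on the oscillating interface $y_N=\epsilon^{1/2}b(\bar y)$, and the Taylor transfer you propose would require control of third derivatives of $v_\epsilon$ up to $\Gamma_\epsilon$ uniformly in $\epsilon$, which the bound $\norma{v_\epsilon}_{W^{3,2}(\Omega_\epsilon)}\leq M$ does not provide (interior regularity of the limit $\hat v$ deduced from \eqref{weakmicro} controls only $\hat v$, not the $\epsilon$-dependent traces you need to pass to the limit). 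Moreover the cell-average correction coming from the projector $P$, of the type $\epsilon^{-1/2}\int_Y\widehat{\partial^2 v_\epsilon}(\bar x,\bar y,0)\,\diff{\bar y}$, is not obviously negligible, and invoking $\int_Y\partial b/\partial y_i\,\diff{\bar y}=0$ does not dispose of it; as a symptom, pushing your displayed relations through naively would yield \eqref{eq: Casado1} with the opposite sign, showing that the identification of interface values with the trace of $\hat v$ at $y_N=0$ cannot be the naive one. The paper closes this gap with a device absent from your plan: it rewrites \eqref{idprep} as the vanishing of the normal component on $\Gamma_\epsilon$ of the vector fields $V^{ij}_\epsilon=\bigl(0,\dots,-\tfrac{\partial^2 v_\epsilon}{\partial x_i\partial x_N},\dots,\tfrac{\partial^2 v_\epsilon}{\partial x_i\partial x_j}\bigr)$ and applies \cite[Lemma 4.3]{casado10}, which is tailored to convert ``zero normal trace on an oscillating boundary plus weak convergence of the unfolded gradients'' (supplied here by Lemma~\ref{lemma: unfolding convergence}) into boundary relations at $y_N=0$, after which $v\in W^{2,2}_0(\Omega)$ kills the tangential second derivatives of $v$ on $W$ and gives \eqref{eq: Casado1}--\eqref{eq: Casado2}. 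Unless you import that lemma with its hypotheses verified, or supply an equivalent trace-passage argument, the second half of the theorem remains unproven.
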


\begin{proof}
By approximation we can assume that $\psi$ is smooth, with support described as above. Then it is easy to see that $\int_{\Omega_\epsilon} v_\epsilon T_\epsilon \psi_\epsilon \,\diff{x}$,  $\int_{\Omega_\epsilon} f_\epsilon T_\epsilon \psi_\epsilon \,\diff{x}$,  $\int_{\Omega_\epsilon\setminus \Omega } D^3v_\epsilon : D^3T_\epsilon \psi_\epsilon\,\diff{x}\to 0 $  as $\epsilon \to 0$.  Moreover, a slight modification of \cite[Lemma 8.47]{ArrLamb} combined with Lemma~\ref{lemma: unfolding convergence} yields
$
\int_\Omega D^3v_\epsilon : D^3T_\epsilon \psi_\epsilon\,\diff{x} \to \int_{W\times Y\times (-\infty,0)} D_y^3\hat{v}(\bar{x},y) : D_y^3\psi(\bar{x},y)\,\diff{\bar{x}} \diff{y}
$. Thus, passing to the limit in (\ref{twoscaleweak}) we obtain (\ref{weakmicro}).
 Differentiating the equality $\nabla v_\epsilon(\bar{x},g_\epsilon(\bar{x}))= 0$ which holds for all $\bar{x}\in W$, we get that for any $i,j=1, \dots , N-1$
\begin{equation}\label{idprep}
\frac{\partial^2 v_\epsilon}{\partial x_i \partial x_j} (\bar{x},g_\epsilon(\bar{x})) + \frac{\partial^2 v_\epsilon}{\partial x_i \partial x_N} (\bar{x},g_\epsilon(\bar{x})) \frac{\partial g_\epsilon(\bar{x})}{\partial x_j} = 0, \quad\quad \textup{for all $\bar{x}\in W$ }.
\end{equation}
Hence, setting
$
V_\epsilon^{ij} = \Big(0,\dots,0,-\frac{\partial^2 v_\epsilon}{\partial x_i \partial x_N}, 0,\dots,0, \frac{\partial^2 v_\epsilon}{\partial x_i \partial x_j}\Big),
$
for all $i=1,\dots, N$, $j=1,\dots,N-1$ we get
$
V_\epsilon^{ij} \cdot \nu_\epsilon = 0$, on $\Gamma_\epsilon$,
where $\Gamma_{\epsilon}$  is the part of the boundary of $\Omega_{\epsilon}$ given by the graph of $g_{\epsilon}$ and $\nu_\epsilon$ is the unit outer normal to $\Gamma_\epsilon$.  We note that by Lemma~\ref{lemma: unfolding convergence}
\[
\frac{1}{\sqrt{\epsilon}} \frac{\partial}{\partial y_k} \Bigg(\frac{\widehat{\partial^2 v_\epsilon}}{\partial x_i \partial x_j} \Bigg) \rightharpoonup    \frac{\partial^3 \hat{v}}{\partial y_i \partial y_j \partial y_k}
\]
in $L^2(W \times Y \times ]-\infty, 0[)$ as $\epsilon \to 0$. Applying \cite[Lemma 4.3]{casado10}, we get that
$
\frac{\partial^2\hat{v}}{\partial y_i \partial y_j}(\bar{x}, \bar{y},0) = \frac{\partial b}{\partial y_j}(\bar{y}) \frac{\partial^2 v}{\partial x_N \partial x_i}(\bar{x}, 0)$ on $W \times Y$
for all $i=1,\dots,N$, $j=1,\dots,N-1$, and since $v \in W^{3,2}(\Omega) \cap W^{2,2}_0(\Omega)$ this implies the validity of  (\ref{eq: Casado1}) and (\ref{eq: Casado2}).
\end{proof}

The two scale problem (\ref{weakmicro}) can be written in a more explicit way by separation of variables. We need the following lemma.
\begin{lemma}

\label{lemma: V}
There exists $V\in w^{3,2}_{Per_Y}(Y \times (-\infty,0))$ satisfying the equation
$$
\int_{Y \times (-\infty,0)} D^3 V : D^3 \psi \,\diff{y} = 0,
$$
for all the test-functions $\psi \in w^{3,2}_{Per_Y}(Y \times (-\infty,0))$ such that $\psi(\bar{y},0) = 0 = \nabla \psi(\bar{y},0)$ on $Y$, and satisfying the boundary conditions
\[
\begin{cases}
V(\bar{y},0) = 0, &\textup{on $Y$},\\
\frac{\partial V}{\partial y_N}(\bar{y},0) = b(\bar{y}), &\textup{on $Y$}.
\end{cases}
\]
Function $V$ is unique up to a sum of a monomial in $y_N$ of the form $a y_N^2$. Moreover,  $V \in W^{6,2}_{Per_Y}(Y \times (d,0))$ for any $d < 0$ and
it satisfies the equation $
\Delta^3 V = 0$, in $Y \times (d,0) $
subject to the boundary condition
$
\frac{\partial^3 V}{\partial y_N^3} (\bar{y},0) = 0$ on $Y $. Finally,
\begin{equation}\label{idenergy}
\int_{Y \times (-1,0)} 3 D^2_y\Big(\frac{\partial V}{\partial y_N} \Big) : D^2_y (b(\bar{y})(1 + y_N)^4)
+ y_N (D^3_y V : D^3 (b(\bar{y})(1+y_N)^4) \,\diff{y} =K.
\end{equation}
where $K$ is as in Theorem~\ref{mainthm}.
\end{lemma}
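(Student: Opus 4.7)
My plan is to prove the four assertions of the lemma in order: existence, uniqueness, regularity together with the strong form, and the energy identity.

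For existence, I would choose a lift $V_0(\bar{y}, y_N) = b(\bar{y})\, y_N\, \chi(y_N)$, where $\chi$ is a smooth cutoff with $\chi(0)=1$, $\chi'(0)=0$, supported in $(-1, 0]$. Then $V_0 \in w^{3,2}_{Per_Y}(Y \times (-\infty, 0))$ and satisfies $V_0(\bar y, 0) = 0$, $\partial_{y_N} V_0(\bar y, 0) = b(\bar y)$. Writing $V = V_0 + W$, it remains to find $W$ in the space of $Y$-periodic functions with $L^2$ third derivatives and with $W(\bar y, 0) = \nabla W(\bar y, 0) = 0$, modulo the one-dimensional kernel $\langle y_N^2 \rangle$ of the seminorm $\|D^3 \cdot\|_{L^2}$ (note $y_N^2$ belongs to the space and has vanishing seminorm). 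On this quotient the seminorm is a Hilbert norm, the bilinear form of the lemma is precisely the inner product, and $\psi \mapsto -\int D^3 V_0 : D^3 \psi$ is a continuous linear functional, so Riesz representation delivers $W$, hence $V$. For uniqueness, the difference $W$ of any two solutions is itself an admissible test function (satisfying the homogeneous boundary conditions), so testing against itself yields $\int |D^3 W|^2 = 0$; hence $W$ is a polynomial of degree at most two, and $Y$-periodicity in $\bar y$ forces $W = \alpha + \beta y_N + \gamma y_N^2$, while the boundary conditions eliminate $\alpha$ and $\beta$.

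For the strong form and regularity, testing against functions compactly supported away from $y_N = 0$ gives $\Delta^3 V = 0$ in the distributional sense; interior elliptic regularity for the triharmonic operator makes $V$ smooth in the open slab, and standard boundary regularity (smooth $b$, flat boundary) upgrades this to $V \in W^{6,2}_{Per_Y}(Y \times (d, 0))$ for every $d < 0$. To extract the natural boundary condition, I integrate the weak equation by parts three times in $y$; using $\Delta^3 V = 0$ and $\psi(\bar y, 0) = \nabla \psi(\bar y, 0) = 0$, all boundary contributions collapse to the single term $\int_Y V_{NNN}(\bar y, 0)\, \psi_{NN}(\bar y, 0)\, d\bar y$, whose vanishing for arbitrary traces $\psi_{NN}(\bar y, 0)$ gives $\partial^3 V/\partial y_N^3 = 0$ on $Y$.

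For the energy identity (\ref{idenergy}), the decisive step is to introduce the function $\widetilde\Psi(\bar y, y_N) := y_N\, b(\bar y)\, (1+y_N)^4$ on $Y \times (-1, 0)$, extended by zero on $Y \times (-\infty, -1)$. The factor $(1+y_N)^4$ vanishes to fourth order at $y_N = -1$, so $\widetilde\Psi \in w^{3,2}_{Per_Y}$, and it satisfies $\widetilde\Psi(\bar y, 0) = 0$, $\partial_{y_N} \widetilde\Psi(\bar y, 0) = b(\bar y)$, matching the boundary data of $V$. Consequently $V - \widetilde\Psi$ is an admissible test function, and the weak equation yields $K = \int |D^3 V|^2 = \int D^3 V : D^3 \widetilde\Psi$. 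A Leibniz expansion with $\Psi = b(\bar y)(1+y_N)^4$ and $\partial_i y_N = \delta_{iN}$ gives $D^3_{ijk}(y_N \Psi) = y_N D^3_{ijk}\Psi + \delta_{iN} D^2_{jk} \Psi + \delta_{jN} D^2_{ik}\Psi + \delta_{kN} D^2_{ij} \Psi$, so $D^3 V : D^3(y_N \Psi) = y_N\, D^3 V : D^3 \Psi + 3\, D^2(\partial_{y_N} V) : D^2 \Psi$, which is exactly the integrand on the left of (\ref{idenergy}). The alternative expression for $K$ in (\ref{kappa}) then follows by integrating $\int D^3 V : D^3 \widetilde\Psi$ by parts three times in $y$ (using $\Delta^3 V = 0$, $V_{NNN}(\bar y, 0) = 0$, $\widetilde\Psi(\bar y, 0) = 0$, and $\nabla \widetilde\Psi(\bar y, 0) = b(\bar y) e_N$), and once more in $\bar y$ to shift tangential derivatives of $\partial_{y_N}^2 V$ onto $b$. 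The main technical chore should be the careful bookkeeping of the many boundary terms in this last calculation.
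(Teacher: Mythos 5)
Your proposal is correct and takes essentially the same route as the paper: the energy identity is obtained with exactly the paper's choice of comparison function $\phi(y)=y_N b(\bar y)(1+y_N)^4$ (extended by zero below $y_N=-1$), using $V-\phi$ as test function and the same Leibniz expansion, while the second expression for $K$ is again left to repeated integration by parts. Your existence, uniqueness, regularity and natural boundary condition arguments simply spell out the ``standard direct methods of the calculus of variations and regularity theory'' that the paper invokes by reference to \cite{ArrLamb}, and they are sound.
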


\begin{proof}
The first part of the lemma can be proved by  standard direct methods of the calculus of variations and  regularity theory, as in \cite{ArrLamb}.  We now prove (\ref{idenergy}). Let $\phi$ be the real-valued function defined on $Y \times ]-\infty,0]$ by
$
\phi(y) =
y_N b(\bar{y}) (1+y_N)^4$ if $-1\leq y_N \leq 0$  and
$\phi(y) = 0$ if $ y_N < -1$.
Then $\phi \in W^{3,2}(Y \times (-\infty,0))$, $\phi(\bar{y}, 0) = 0$ and
$\nabla\phi(\bar{y},0) = (0,0,\dots,0, b(\bar{y}))$.
Note that the function $\psi = V - \phi$ is a suitable test-function in equation $\int_{Y \times (-\infty,0)} D^3 V : D^3 \psi \,\diff{y} = 0$. By plugging it in we get
\begin{multline}
\int_{Y\times (-\infty,0)} |D^3 V|^2\,\diff{y} = \int_{Y \times(-1,0)} D^3 V : D^3\phi\, \diff{y}\\
= 3\int_{Y \times (-1,0)} D^2_y\Big(\frac{\partial V}{\partial y_N} \Big) : D^2_y (b(\bar{y})(1 + y_N)^4) \diff{y}\\
+ \int_{Y \times (-1,0)} y_N (D^3_y V : D^3 (b(\bar{y})(1+y_N)^4) \,\diff{y}.
\end{multline}
Thus the left-hand side of (\ref{idenergy}) equals $  \int_{Y\times (-\infty,0)} |D^3 V|^2\, d{y}$. The second equality in (\ref{kappa}) can be proved by integrating repeatedly by parts.
\end{proof}

\begin{theorem}\label{last}
Let $V$ be the function defined in Lemma~\ref{lemma: V}. Let $v,\hat{v}$ be as in Theorem~\ref{thm: macroscopic limit}. Then up to the sum of monomials of the type $a(\bar{x})y_N^2$, we have that
\begin{equation}\label{modulo}
\hat{v}(\bar{x},y) = V(y) \frac{\partial^2 v}{\partial x_N^2}(\bar{x},0).
\end{equation}
In particular,  the strange term in  \eqref{strangeterm} equals
$
-K \int_W  \frac{\partial^2 v}{\partial x_N^2}(\bar{x},0) \frac{\partial^2 \varphi}{\partial x_N^2}(\bar{x},0) \diff{\bar{x}}
$
where $K$ is as in  (\ref{mainthm}).
\end{theorem}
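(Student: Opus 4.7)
The plan is to characterise $\hat v$ via separation of variables, using the uniqueness structure of the microscopic problem solved by $V$ in Lemma~\ref{lemma: V}, and then to reduce the strange term to an energy integral involving $V$ only. First I would show that for almost every $\bar x\in W$ the section $\hat v(\bar x,\cdot)$ satisfies
\[
\int_{Y\times(-\infty,0)} D_y^3\hat v(\bar x,y):D_y^3\psi(y)\,\diff{y}=0
\]
for every $\psi\in w^{3,2}_{Per_Y}(Y\times(-\infty,0))$ with $\psi(\bar y,0)=\nabla\psi(\bar y,0)=0$. This follows from \eqref{weakmicro} by testing against tensor products $\phi(\bar x)\psi(y)$ with $\phi\in C_c^\infty(W)$ arbitrary. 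Combining this equation with the trace identities \eqref{eq: Casado1} and \eqref{eq: Casado2} together with the $Y$-periodicity of $\hat v$ in $\bar y$, the vanishing of the tangential Hessian forces both $\hat v(\bar x,\bar y,0)$ and $(\partial\hat v/\partial y_N)(\bar x,\bar y,0)-b(\bar y)(\partial^2 v/\partial x_N^2)(\bar x,0)$ to be constants in $\bar y$, which I denote by $c_0(\bar x)$ and $c_1(\bar x)$ respectively.

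Second, I would introduce the candidate remainder
\[
W(\bar x,y):=\hat v(\bar x,y)-V(y)\,\frac{\partial^2 v}{\partial x_N^2}(\bar x,0).
\]
By linearity of the microscopic problem and by Lemma~\ref{lemma: V}, $W(\bar x,\cdot)$ lies in $w^{3,2}_{Per_Y}$ and solves the same homogeneous weak equation as $\hat v(\bar x,\cdot)$, with Dirichlet-type traces at $y_N=0$ equal to the $\bar y$-constants $c_0(\bar x)$ and $c_1(\bar x)$. To conclude that $W(\bar x,\cdot)$ is polynomial of degree at most two in $y$, I would test the weak equation for $W$ against the admissible function $W(\bar x,\cdot)-c_0(\bar x)-c_1(\bar x)y_N$, which is $Y$-periodic, has third-order derivatives in $L^2$, and vanishes together with its gradient at $y_N=0$. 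This produces
\[
\int_{Y\times(-\infty,0)}\abs{D_y^3 W(\bar x,y)}^2\,\diff{y}=0,
\]
so that $W(\bar x,y)$ is at most quadratic in $y$; the $Y$-periodicity in $\bar y$ then reduces it to a polynomial $a_0(\bar x)+a_1(\bar x)y_N+a_2(\bar x)y_N^2$. Up to the ambiguity class of $V$ in Lemma~\ref{lemma: V}, this establishes \eqref{modulo}.

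Third, I would substitute this decomposition into the strange term \eqref{strangeterm}. Since any polynomial of degree at most two in $y$ is annihilated by both $D_y^3$ and $D_y^2(\partial/\partial y_N)$, the polynomial correction contributes nothing to \eqref{strangeterm}; what remains factorises as $(\partial^2 v/\partial x_N^2)(\bar x,0)\,(\partial^2\varphi/\partial x_N^2)(\bar x,0)$ multiplied by precisely the left-hand side of \eqref{idenergy}, which equals $K$ by Lemma~\ref{lemma: V}. The strange term thus reduces to $-K\int_W(\partial^2 v/\partial x_N^2)(\bar x,0)\,(\partial^2\varphi/\partial x_N^2)(\bar x,0)\,\diff{\bar x}$, which is exactly the claimed identity.

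The main obstacle is the uniqueness argument in the second step, namely showing that the only $w^{3,2}_{Per_Y}$-solutions of the homogeneous microscopic problem with $\bar y$-constant traces are polynomial of degree at most two in $y$. This requires a Poincar\'e--Wirtinger-type argument in the unbounded strip $Y\times(-\infty,0)$, analogous to the one used in Lemma~\ref{lemma: unfolding convergence}, plus a careful verification that $W-c_0-c_1 y_N$ is an admissible test function in the weak microscopic formulation despite only the third-order derivatives being a priori controlled in $L^2$; the natural boundary condition $\partial^3 W/\partial y_N^3(\bar x,\bar y,0)=0$ inherited from the weak formulation is what makes the integration by parts legitimate.
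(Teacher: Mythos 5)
Your proposal is correct and follows essentially the same route as the paper: identify $V(y)\,\frac{\partial^2 v}{\partial x_N^2}(\bar{x},0)$ as a solution of \eqref{weakmicro} with the trace data \eqref{eq: Casado1}--\eqref{eq: Casado2}, use uniqueness up to low-order polynomials in $y_N$, and then substitute into \eqref{strangeterm} and apply \eqref{idenergy}. The only difference is that you prove the uniqueness explicitly via the energy test with $W-c_0-c_1y_N$ (which the paper merely asserts); note that your final worry is unnecessary, since that function is directly admissible in the weak formulation (its third-order derivatives coincide with those of $W$ and its trace and gradient vanish at $y_N=0$), so no integration by parts or natural boundary condition is needed.
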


\begin{proof}
The function $
\hat{v}(\bar{x},y) = V(y) \frac{\partial^2 v}{\partial x_N^2}(\bar{x},0)$
satisfies problem (\ref{weakmicro}) subject to the boundary conditions (\ref{eq: Casado1}), (\ref{eq: Casado2}). Since the solution to such problem is unique up to the sum of monomials  of order $2$ in $y_N$, we conclude that the first part of the statement holds. The second part of the statement follows by  replacing $\hat{v}(\bar{x},y)$ by $V(y) \frac{\partial^2 v}{\partial x_N^2}(\bar{x},0)$ in \eqref{strangeterm} and using (\ref{idenergy}).
\end{proof}
It is now clear that combining Theorem~\ref{thm: macroscopic limit}  with Theorem~\ref{last} provides the proof of statement (ii) of Theorem~\ref{mainthm}.

{\bf Case $\alpha <3/2$.} In this case, it is not necessary to use the operator $T_{\epsilon }$ defined above, because it turns out that the limit energy space is not $W^{3,2}(\Omega )\cap W^{2,2}_0(\Omega)$ but  $W^{3,2}(\Omega )\cap W^{2,2}_0(\Omega)\cap W^{3,2}_{0,W}(\Omega)$ where $W^{3,2}_{0,W}(\Omega )$ is defined as the closure in $W^{3,2}(\Omega )$ of the $C^{\infty }$-functions which vanish in a neighbourhood of $W$. (Note that $W^{3,2}_{0,W}(\Omega )$ can be equivalently defined as the space of those functions  $u\in W^{3,2}(\Omega )$ such $D^{\alpha }u=0$ on $W$ for all $|\alpha |\le 2$.) In fact, since $\alpha <3/2$ and  the solution $v_{\epsilon }$ of problem (\ref{eq: Poisson prblm}) satisfies (\ref{idprep}), by \cite{casado10} the vector fields $V^{ij}_{\epsilon}$ defined above converge weakly in $W^{1,2}_{0,W}(\Omega )$. This implies by \cite[Lemma 4.3, Theorem 5.1]{casado10} that $D^{\alpha }v=0$ on $W$ for all $|\alpha |=2$, which  gives
$v\in W^{3,2}_{0,W}(\Omega )$.  Let $\varphi \in W^{3,2}(\Omega) \cap W^{2,2}_0(\Omega)\cap W^{3,2}_{0,W}(\Omega)$ be  fixed. Let $\varphi_0$ be the function obtained by extending  $\varphi$ by zero outside $\Omega$. It is straightforward to prove that $\varphi_0\in W^{3,2}(\Omega_{\epsilon}) \cap W^{2,2}_0(\Omega_{\epsilon})$ hence it is possible to test $\varphi_0$ in the weak formulation of problem (\ref{eq: Poisson prblm}) to obtain
$
\int_{\Omega_\epsilon} D^3v_\epsilon : D^3 \varphi_0 \, \diff{x} + \int_{\Omega_\epsilon}v_\epsilon  \varphi_0\, \diff{x} = \int_{\Omega_\epsilon} f_\epsilon  \varphi_0\, \diff{x}
$. By passing to the limit in this equality as $\epsilon\to 0$, one easily obtain that $
\int_{\Omega} D^3v : D^3 \varphi \, \diff{x} + \int_{\Omega}v  \varphi \, \diff{x} = \int_{\Omega } f \varphi \, \diff{x}
$ which concludes the proof of statement (iii).

\vspace{6pt}

{\bf Acknowledgments.} The first author is partially supported by grants MTM2012-31298 MINECO, Spain and Ayuda UCM-BSCH a Grupos de Investigaci\'on: Grupo de Investigaci\'on CADEDIF--920894. The second and third author  acknowledge financial support from the research project `Singular perturbation problems for differential operators' Progetto di Ateneo of the University of Padova and from the research project `INdAM GNAMPA Project 2015 - Un approccio funzionale analitico per problemi di perturbazione singolare e di omogeneizzazione'. The second and the third author are also members of the Gruppo Nazionale per l'Analisi Matematica, la Probabilit\`{a} e le loro Applicazioni (GNAMPA) of the Istituto Nazionale di Alta Matematica (INdAM).

\vspace{6pt}


\begin{thebibliography}{9}


\bibitem{arbr} Arrieta J.M. Bruschi S.M, 2007. \emph{
            Rapidly varying boundaries in equations with nonlinear boundary conditions. The case of a Lipschitz deformation},
            Math. Mod. Methods in Applied Science, Vol 17, No. 10, 1555-1585.

\bibitem{Comprendou}
            Arrieta~J.M,  Lamberti~P.D.  2013.\emph{
            Spectral stability results for higher-order operators under perturbations of
            the domain. (Stabilit\'e spectrale des op\'erateurs d’ordre sup\'erieur pour des
            perturbations du domaine.)},
            C. R., Math., Acad. Sci. Paris 351, No. 19-20, 725-730.

\bibitem{ArrLamb}
         Arrieta~J.M,  Lamberti~P.D. 2015. \emph{
         Higher order elliptic operators on variable domains. Stability results and boundary oscillations for intermediate problems},
         preprint, online at arXiv:1502.04373v2 [math.AP]

\bibitem{BuosoLamb}
        Buoso~D,  Lamberti~P.D. 2013. \emph{
        Eigenvalues of polyharmonic operators on variable domains},
        ESAIM Control Optim. Calc. Var., 19, p.1225-1235.

\bibitem{BuosoLamb2}
        Buoso~D,  Lamberti~PD.  2014. \emph{
        Shape deformation for vibrating hinged plates},
        Math. Methods Appl. Sci. 37, p.237-244.

\bibitem{BP}
        Buoso~D, Provenzano L. 2015. \emph{
        A few shape optimization results for a biharmonic Steklov problem},
        J. Differential Equations, 259, p.1778-1818.

\bibitem{BurLamb}
        Burenkov~V.I,  Lamberti P.D. 2009. \emph{
        Spectral stability of higher order uniformly elliptic operators},
        Sobolev spaces in mathematics. {II}, Int. Math. Ser. (N. Y.) 9, p.69-102.

\bibitem{bula2012}
Burenkov~V.I,  Lamberti P.D.  2012. \emph{
Sharp spectral stability estimates via the Lebesgue
measure of domains for higher order elliptic operators},   Rev. Mat. Complut. 25, 435-457.



\bibitem{casado10} Casado-D\'iaz, J.; Luna-Laynez, M.; Su\'arez-Grau, F. J. 2010. \emph{Asymptotic behavior of a viscous fluid with slip boundary conditions on a slightly rough wall}. Math. Models Methods Appl. Sci. 20, no. 1, 121-156.


\bibitem{ciodamgri} Cioranescu~D,  Damlamian A, Griso G. 2008. The periodic unfolding method in homogenization. SIAM J. Math. Anal.  40,  no. 4, 1585-1620.

\bibitem{GazzGS}
        Gazzola~F ,  Grunau~H.-C, Sweers~G. 2010. \emph{
        Polyharmonic boundary value problems - Positivity preserving and nonlinear higher order elliptic equations in bounded domains},
        Lecture Notes in Mathematics, Springer-Verlag, Berlin.

\bibitem{Maz-Naz} Maz'ya~V.G,  Nazarov~S.A. 1987. \emph{Paradoxes of limit passage in solutions of boundary value problems involving the approximation of smooth domains by polygonal domains},  Math. USSR Izvestiya 29, no. 3, 511-533.

\bibitem{Maz-Naz-Pla} Maz'ya~V.G,  Nazarov~S.A,  Plamenevskii~B. 2000. \emph{ Asymptotic Theory of Elliptic
Boundary Value Problems in Singularly Perturbed Domains} I and II,  Birkh\"auser,
Basel.

\bibitem{Nec} Nečas J., 2012. \emph{Direct Methods in the Theory of Elliptic Equations}, Springer-Verlag Berlin Heidelberg.



\end{thebibliography}
\end{document}